\documentclass[10pt,reqno]{amsart}

\usepackage[utf8]{inputenc}
        \usepackage[T1]{fontenc}
        \usepackage{lmodern}
        \usepackage{graphicx}
        \usepackage{caption}
         \usepackage{floatrow}

\usepackage{amsmath,amsthm,amscd,amssymb,bbm,graphicx, color}

\usepackage{mathrsfs} %gives fancy script A font for attractor
\usepackage{geometry}          
\usepackage{graphics,multicol}%changepage}
\usepackage{epstopdf}
\usepackage{enumerate}

\usepackage{dsfont}

\newtheorem{theorem}{Theorem}

\newtheorem{lemma}[theorem]{Lemma}
\newtheorem{proposition}[theorem]{Proposition}

\setlength{\textheight}{22cm}
\setlength{\textwidth}{14cm}
\setlength{\oddsidemargin}{6mm}
\setlength{\evensidemargin}{6mm}
\setlength{\topmargin}{-6mm}

						% DIAMETER
							% DISTANCE
							% SIGN
								% variation
							% VARIATION
							% trace
							% identity
								% Fixpoint set
								% LIPSHITZ CONSTANT
						% CONSTANT
							% oscillation

%\usepackage{natbib}

\title[The frog model on homogeneous trees with geometric lifetime]{Critical parameter of the frog model on homogeneous trees with geometric lifetime}
\date{\today}
 
\author{Sandro Gallo}
\address{Departamento de Estatística, Universidade Federal de São Carlos}
\email{sandro.gallo@ufscar.br}
\author{Caio Pena}
\address{Instituto Federal de São Paulo, Campus Araraquara}
\email{caio.pena@ifsp.edu.br}
\thanks{SG was supported by FAPESP (Auxílio Regular: 2019/23439-4) and CP thanks financial support of IFSP during the realization of his PhD}

 \begin{document}
\maketitle 
 \begin{abstract}
We consider the frog model with geometric lifetime (parameter $1-p$) on homogeneous trees of dimension $d$. In 2002, \cite{alves2002-2} proved that there exists a critical lifetime parameter $p_c\in(0,1)$ above which infinitely many frogs are activated with positive probability, and they gave lower and upper bounds for $p_c$.  Since then, the literature on this model focussed on refinements of the upper bound. In the present paper we improve the bounds for $p_c$ \emph{on both sides}. We also provide a discussion comparing the bounds of the literature and their proofs. Our proofs are based on coupling.\\

KEYWORDS: Frog models, Renewal theory, Multi-types branching processes, Critical parameter. 

AMS classification: Primary: 60K35, 05C81, Secondary: 60K05
\end{abstract}
\section{Introduction}

Frog models are simple models for propagation of information (or rumor, or disease) through a graph: active (informed/ill) particles perform independent random walks on the graph, activating (informing/infecting) frogs of the visited vertices. This class of models seems to have been first introduced by \cite{telcs1999branching} under the name of \emph{egg model}: particles  making independent simple symmetric random walks on $\mathbb Z^d$ transform eggs of the visited vertices into active particles. In our terminology, particles are frogs and eggs (inactive particles) are inactive frogs. 

The literature on frog models has grown very fast over the last two decades, with variations depending on the lifetime of the frogs, the underlying graph structure on which the frogs wander around, the type of random walk the frogs perform, the initial number of frogs per vertex \emph{etc...} 
In the present paper we stand in the line of \cite{alves2002-2} who considered the case where  frogs have geometric lifetime and make simple symmetric random walks, and we focus specifically on homogeneous infinite trees. %Let us mention, to conclude, that, since the seminal paper \cite{alves2002-2}, the literature on frog models has grown very fast (see for instance, just in the last 3 years, \cite{dobler2018recurrence}, \cite{hermon2018frogs}, \cite{hoffman2019infection}, \cite{kubota2019deviation}, \cite{johnson2019sensitivity}, \cite{nakajima2019first}, \cite{muller2020transience}, \cite{deijfen2020initial} between many others). This recent literature considers variants of the model with different assumptions on the lifetime, different graphs ($\mathbb Z^d$, random graphs, (ir)regular trees), different types of random walk performed by the frogs (with drift or not), different initial distributions of inactive frogs on the graph (deterministic or random)... Since most of these works assume infinite lifetime, survival is certain and different questions arise, such as  recurrence \emph{vs.} transience, shape theorem, first passage time, importance of the  initial configuration...

More precisely, consider the homogeneous tree $\mathbb T_d$, that is, the rooted tree in which each vertex has (is connected by edges to) $d+1$ neighbours. One frog is put at each vertex and all but the one of the root start inactive. Active frogs perform  simple symmetric random walks on $\mathbb T_d$, for a geometric (parameter $1-p$) number of steps\footnote{The geometric starts at $0$, that is, the probability to make $n\ge0$ steps is $(1-p)p^n$.}, activating the inactive frogs of the visited vertices. After its geometric number of steps, the active frog ``dies'': it remains inactive forever. The process \emph{survives} if infinitely many frogs are activated. 

For any $p\in[0,1]$ we denote the law of the process by $\mathbb{P}_p$. 
Naturally, if $p=0$ then the frog of the root dies and $\mathbb P_p(\text{survival})=0$, while on the other hand, if $p=1$, frogs won't die and $\mathbb P_p(\text{survival})=1$. Moreover, it is clear that $\mathbb P_p(\text{survival})$ is non-decreasing in $p$, and we can define the critical parameter for the model on $\mathbb T_d$ as
\[
p_c=p_c(d):=\inf\{p\in[0,1]:\mathbb P_p(\text{survival})>0\}. 
\]

In the present paper we are interested in obtaining tight bounds for $p_c$, as functions of $d$, so let us list rapidly the results of the literature concerning this question.  In their seminal paper in 2002,  \cite{alves2002-2} proved that 
\begin{equation}\label{eq:bounds_seminal}
\frac{d+1}{2d+1}\le p_c\le \frac{d+1}{2d-2}\,.
\end{equation}
Three years later, \cite{lebensztayn/machado/popov/2005} was dedicated to improve the upper bound to
\[
p_c\le \frac{d+1}{2d}.
\]
It is only in 2018, with the paper of \cite{gallo2018frog}, that a new improvement of the upper bound was obtained as a consequence of the study of a percolation model on \emph{oriented} trees:
\[
p_c\le \frac{(d+1)[(7d-1)-\sqrt{(7d-1)^2-14}] }{d(7d-1)^2-7d+2-d(7d-1)\sqrt{(7d-1)^2-14}}\,.
\]
One year after,  \cite{lebensztayn2019new} dedicated to one further improvement of the upper bound, but the obtained expression is too complicated to be stated here (see Definition 2.1 and Theorem 2.2 therein).

Notice that the above improvements all refer to the upper bound of $p_c$. The  lower bound in display \eqref{eq:bounds_seminal}, obtained by \cite{alves2002-2}, was not improved so far. 

In this paper we prove the following result. 
 
\begin{theorem}\label{theorem:bounds}
For any $d\ge2$,
\begin{equation}\label{eq:bounds}
         \frac{2 (d+1)}{\sqrt{4 d^2+4 d-3}+2 d+1}\leq p_c(d) \leq \frac{(d+1)({2-\frac{1}{14d^{2}}-4d})(5d-8d^{2})}{(5d-8d^{2})^2+d\left({2-\frac{1}{14d^{2}}-4d}\right)^{2}}\,.
    \end{equation}
\end{theorem}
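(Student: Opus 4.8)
The plan is to reduce both inequalities to the sub- or super-criticality of a suitable multi-type branching process obtained by coupling, with all mean offspring numbers expressed through the first-passage probability of the killed walk. Concretely, write $\beta=\beta(p)$ for the probability that a simple random walk on $\mathbb T_d$, killed at each step with probability $1-p$ (equivalently, run for a geometric($1-p$) number of steps), ever reaches a prescribed neighbour of its starting vertex. A one-step, renewal-type decomposition along the tree -- either step directly to the target neighbour, or step to one of the other $d$ neighbours and then perform two independent first passages (back to the start, then on to the target) -- shows that $\beta$ is the smaller root of
\begin{equation}\label{eq:beta}
\frac{dp}{d+1}\,\beta^{2}-\beta+\frac{p}{d+1}=0,\qquad\text{equivalently}\qquad p=\frac{(d+1)\beta}{1+d\beta^{2}}.
\end{equation}
Because the geometric lifetime is memoryless, the strong Markov property gives that on a tree the probability a single frog ever visits a vertex at distance $k$ is exactly $\beta^{k}$; this is the quantity that feeds every mean computation below.

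For the \emph{lower bound} I would build a multi-type branching process that stochastically dominates the set of activated frogs, by decreeing that each frog, when activated, runs an independent killed walk and activates a \emph{fresh} frog at every vertex it reaches, ignoring that a vertex carries only one frog. Naively counting every visit is far too lossy (it yields mean $1/(1-p)$ and never a nontrivial bound), so the process must be organised by orienting the edges away from the root and recording, for each activated frog, only the \emph{first} crossings of fresh edges, the backtracking excursions being absorbed into $\beta$ via \eqref{eq:beta}. Distinguishing frogs by whether they still ``see'' the parent direction produces a two-type mean matrix $M(p)$ with entries affine in $\beta$ and in the branching numbers $d$ and $d+1$. I expect the extinction threshold $\rho(M(p))=1$ to collapse, after substituting \eqref{eq:beta}, to $y^{2}-(2d+1)y+1=0$ with $y=p/(d+1)$, whose smaller root is precisely $\ell(d)=2(d+1)/(\sqrt{4d^{2}+4d-3}+2d+1)$; since domination forces $\mathbb P_p(\text{survival})=0$ below threshold, this yields $p_c\ge\ell(d)$.

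For the \emph{upper bound} I would run the coupling in the opposite direction, constructing a multi-type branching process that the frog model \emph{dominates}, by counting only activations certified to be fresh -- restricting attention to frogs that penetrate into previously untouched subtrees and discarding any whose walk risks interacting with already-active frogs. Such a construction carries free parameters (how far a walk is allowed to wander before we cash in an activation, which first passages we are willing to certify), and its supercriticality condition takes the scale-invariant form $p>(d+1)\,r/(r^{2}+d)$ for an effective crossing parameter $r$ playing the role of $1/\beta$. The remaining work is to choose these parameters and to replace $\beta$ by an explicit rational bound in $d$ -- the origin of the awkward constants $5d-8d^{2}$ and $2-\tfrac{1}{14d^{2}}-4d$ -- giving $r=(5d-8d^{2})/(2-\tfrac{1}{14d^{2}}-4d)$ and hence the stated $u(d)$; above $u(d)$ the process is supercritical, so $\mathbb P_p(\text{survival})>0$ and $p_c\le u(d)$.

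The main obstacle in both directions is the coupling bookkeeping rather than the algebra. For the lower bound the delicate point is that a forward-only comparison \emph{under}-counts -- a frog activated at $u$ can walk back through its parent $v$ and be the first to reach a sibling of $u$ -- so the dominating process must legitimately account for these backward excursions while remaining a genuine finite-mean, independent-subtree branching process; making this accounting tight enough to beat $\tfrac{d+1}{2d+1}$, rather than merely correct, is exactly where the two-type structure and the precise value of $\beta$ are needed. For the upper bound the analogous difficulty is certifying freshness without discarding so much that the bound degrades, together with the optimisation over the construction's parameters that produces the explicit expression; I expect this optimisation, and not the branching-process theory, to be the most laborious step.
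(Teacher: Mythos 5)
Your preliminaries are sound: your $\beta$ is exactly the paper's $r$ of \eqref{eq:r_e_p}, and $p=(d+1)\beta/(1+d\beta^{2})$ is the conversion \eqref{eq:r_vs_p}. But both halves of your argument stop precisely where the proof lives. For the lower bound, the dominating two-type process is never constructed: you give no offspring distributions, no mean matrix, no verification of domination, and you yourself isolate the backward-excursion accounting as an unresolved crux --- then simply assert that the threshold ``collapses'' to $y^{2}-(2d+1)y+1=0$, an assertion reverse-engineered from the statement rather than derived. Worse, the $\beta$-based route points away from what works. The most generous fully dominating version of your construction (each activated frog spawns a fresh particle at \emph{every} vertex of its killed walk's range) has mean offspring $(d+1)\beta/(1-d\beta)$, hence threshold $\beta=1/(2d+1)$, i.e.\ $p=(2d+1)/(4d+1)$, which is \emph{weaker} than even the old bound $(d+1)/(2d+1)$ of \cite{alves2002-2}; any pruning of the range that could make it tighter is exactly where domination becomes doubtful. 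The paper's proof of this half uses no first-passage probabilities at all: it slows the model down so one frog moves one single step per unit of time, reinterprets each step as a death-plus-birth event, waits until the visited cluster has $d+3$ vertices so that every non-tip frog has at least \emph{two} visited neighbours, and takes as types ``tip'' versus ``non-tip''. The resulting mean matrix has entries explicit in $p$ and $d$, its Perron root gives the stated threshold, and an explicit uniform-variable coupling (the $(a,b)$-partitions, which also handle the fact that a type-1 frog may later transform into type 2) establishes domination. Neither this mechanism nor a workable substitute appears in your proposal.

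For the upper bound the gap is larger still. Your ``supercriticality condition'' $p>(d+1)r/(r^{2}+d)$ is just the change of variables \eqref{eq:r_vs_p} restated; it says nothing about \emph{where} the critical value of the crossing parameter sits, which is the entire problem. The constants $5d-8d^{2}$ and $2-\tfrac{1}{14d^{2}}-4d$ are not tunable parameters of a freshness-certifying branching process: in the paper they are the output of a three-stage derivation, namely (i) comparison with the \emph{oriented} frog model; (ii) Theorem \ref{theo:graal}, proved via renewal theory (the probabilities $u_{n}=\mathbb{P}_p(o\stackrel{c}{\rightarrow}v)$ form a renewal sequence with inter-renewal law $f_{k}=r^{k}\prod_{i=1}^{k-1}(1-r^{i})$, combined with a Galton--Watson argument at scale $N$), which identifies $r_{c}$ as the root of $\sum_{k\ge1}d^{k}r^{k}\prod_{i=1}^{k-1}(1-r^{i})=1$; and (iii) sandwiching this power series between explicit polynomials of degree $5$ and $6$ (Proposition \ref{corollary:lower and upper bounds as root of a polynomial}) and bounding their roots by Newton--Raphson iterations and sign checks (Lemma \ref{corollary: explicit formula for critical parameter for directed tree}). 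Your sketch contains an analogue of (i) only; with no identity playing the role of (ii) and no approximation scheme playing the role of (iii), there is no mechanism in your proposal that could produce the claimed expression --- it is copied from the target, not derived. As it stands, neither inequality of the theorem is proved.
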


\vspace{0.1cm}

So, since the definition of the model by \cite{alves2002-2}, two papers were dedicated exclusively to obtain improvements on the upper bound (\cite{lebensztayn/machado/popov/2005} gave a simple bound and \cite{lebensztayn2019new} gave an improvement, however complicated) while another one \cite{gallo2018frog} presented, as a consequence of other results, a bound ``in the middle''. Here we get better bounds \emph{on both sides}: we provide the first improvement of the lower bound since \cite{alves2002-2}, and get a tighter and simpler upper bound than \cite{lebensztayn2019new}. There is another motivation behind this seek for tighter bounds, that we now explain. \cite{fontes/machado/sarkar/2004} proved that the critical parameter of the frog model, in general graphs, is not always a monotonic function of the graph. For homogeneous trees, the question of whether or not $p_c(d)$ is monotonically decreasing in $d$ remains open. Denote by $l(d)$ (\emph{resp.} $r(d)$) the lower (\emph{resp.} upper) bound of \eqref{eq:bounds}. What we can show (using Mathematica) is that $l(d)>r(ad)$ for any $a\ge1.75$. This implies that  $p_c(d)>p_c(ad)$ for those values of $a$, meaning for instance that $p_c(2)>p_c(4)>p_c(7)>p_c(13)...$. A proof of monotonicity using this argument needs the bounds to be enhanced, and we believe the present paper also gives new insights in this direction. 

Further related literature will be provided in Section \ref{sec:discussion}. We will also take the opportunity to discuss further the monotonicity issue, and clarify the relation between the  upper bounds of the literature and their respective proofs. As we will explain there, no significant improvement on the upper bound can be achieved using our method of proof, meaning that a different approach has to be developed to prove monotonicity by comparisons of lower and upper bounds. 
 
%To conclude this section, remark that, for large $d$'s, frogs are  unlikely to go back in direction to the root, and similarly, two active frogs on the same vertex are unlikely to jump the same vertex. Thus each frog, while active, usually activates one new frog at each step, until it dies. For this reason the model behaves as a Galton-Watson process for large $d$, and it is natural that $p_c(d)\rightarrow\frac{1}{2}$. This is actually what happens for all the bounds which were obtained above, and therefore, they are asymptotically correct.  

 The paper is organized as follows: we prove the lower bound in Section \ref{sec:lower}, we prove the upper bound in Section \ref{sec:upper}, and we conclude with a discussion in Section \ref{sec:discussion}.

\section{Proof of the lower bound}\label{sec:lower}

The idea of the proof is to look at the frog model as a branching process, and to couple it with a  two-type branching process (TTBP) in such a way that  the latter dominates the former. This TTBP is defined in Subsection  \ref{def:GW}. In Subsection \ref{def:frog_as_GW}, we give the alternative construction of the frog model  as a branching process, which we call \emph{frog model branching process} (FMBP). We prove the lower bound of Theorem \ref{theorem:bounds} in Subsection \ref{def:coupling}, by constructing the coupling between the TTBP and the FMBP.  

\subsection{A two-type branching process}\label{def:GW}
Consider  a two-type branching process in which, at each time step, exactly one particle is chosen, dies, and gives birth to a random number of individuals of each type, independently of everything else. For $a=1,2$, we denote by $p_a(i,j)$ the probability that a particle of  type $a$ generates $i$ particles of type 1 and $j$ particles of type 2:
%\begin{itemize}
%    \item types 1 particles have probability $p_1(i,j)$ to generates $i$ particles of type 1 and $j$ particles of type 2 with
\begin{align}\label{eq:type1TT}
p_1(0,0)=1-p,\,\,p_1(1,0)&=0,\,\,
p_1(2,0)=\frac{pd}{d+1},\,\,
p_1(0,1)=\frac{p}{d+1}.
\end{align}  
%\item  types 2 particles have probability $p_2(i,j)$ to generates $i$ particles of type 1 and $j$ particles of type 2 with
\begin{align}
p_2(0,0)=1-p,\,\,p_2 (1,0)=\dfrac{p}{d+1},\,\,
p_2(2,0)=\dfrac{p(d-1)}{d+1},\,\,
p_2(0,1)=\dfrac{p}{d+1}.
\end{align}
%\end{itemize}

With regard to the question of survival, this TTBP behaves as the  two-type Galton-Watson process with the same offspring distribution given by $p_1$ and $p_2$ as above. And it is well-known that a multi-types Galton-Watson process has  probability zero to survive if, and only if, the largest eigenvalue of first moment matrix is smaller or equal to $1$ (see \cite{athreya2004branching} for instance). Simple calculations show that this matrix is
\begin{equation*}
M=      \left(
\begin{array}{cc}
\frac{2 d p}{d+1} & \frac{p}{d+1} \\
\frac{(2 d-1) p}{d+1} & \frac{p}{d+1} 
\end{array}
\right)
\end{equation*}
and has largest eigenvalue 
\begin{equation*}
\rho_M=\frac{\left(\sqrt{4 d^2+4 d-3}+2 d+1\right) p}{2 (d+1)}.
\end{equation*}

In other words, if 
\[
p\le \frac{2 (d+1)}{\sqrt{4 d^2+4 d-3}+2 d+1},
\]
then the TTBP defined above will generate finitely many individuals with probability $1$.

\subsection{The frog model as a branching process}\label{def:frog_as_GW}

To compare to the above TTBP, we define the following modification in the dynamics of the frog model, which does not alter its survival probability.

\begin{enumerate}
        \item\label{item: follow one particle at time} We consider the frog model in a way that frogs move one at a time, and at each time, the frog which is chosen to make the step is arbitrary. That is, the frog of the origin makes a move (with probability $p$), activating the sleeping frog of the visited site. Then, we choose whichever frog between the activated ones, to make a move (move which is done with probability $p$ too) while the others stay frozen (don't move) and active, and can be chosen to make a move in a future time step.  This procedure slows down the process in the sense that it propagates slower on the tree, but since the random walks of each activated frog are independent, this doesn't change anything in terms of survival.  
        \item For further comparison with a two-type branching  process, we will interpret ``moves'' differently. At each step, instead of saying that the chosen frog moves to a neighboring site, we will think that it dies out, and with probability $p$ gives birth to frogs at one neighboring site: one frog if the chosen neighboring site has been already visited, and two frogs otherwise.
        \item For any $t\ge0$, we denote by $\mathcal T_t\subset \mathbb{T}_d$ the set of visited sites at time $t$.  Notice that $\mathcal T_t\subseteq\mathcal T_{t+1},t\ge0$. 
        \item {We consider the model starting from the random time $K=\inf\{t\ge0:|\mathcal T_t|=d+3\}$ where $|\cdot|$ denotes the cardinal of the set. This guarantees that at least one frog at distance $2$ of the root has been activated (see next point). Notice  that $K=\infty$ has a positive probability of occurring. } 
    \item At each time step $t$, frogs which are at the tip of $\mathcal T_t$ (meaning that they have $d$ unvisited neighboring sites) are classified as Type $1$, and the other frogs are classified as Type 2. Observe that, since we consider times $t\ge K$, at least one neighboring site of a Type 2 frog is surrounded by at least two already visited sites. This means that any activated frog in the system can be classified as either type 1 or type 2. 
\end{enumerate}

Notice that, if a frog, when it births, is of type 1, depending on the position of its vertex $v$ with respect to the evolving set $\mathcal T_t,t\ge1$, it may transform into type 2 (notice the difference in the preceding sentence between ``transform'' and  ``give birth to''). However, the inverse cannot occur, a frog which births of type 2 cannot, with time passing, transform into a type 1 frog. 

Fix $t\ge K$ and $\mathcal T_t=T$. Denote by $p_v(i,j|T)$ the offspring distribution of the frog located at $v$ inside $T$ which has been chosen to make a move, where $i$ and $j$ are respectively the numbers of offsprings of types 1 and 2. For any $v$, the distribution $p_v$ lives on $\{(0,0),(1,0),(0,1),(2,0)\}$. The location of $v$ inside $\mathcal T_t=T$ specifies the type 1 or 2 of the frog:
\begin{itemize}
\item Suppose it has type 1. Then, independently of $v$ and $T$
\begin{align}\label{eq:type1Frog}
p_v(0,0|T)=1-p\,,\,\,
p_v(1,0|T)=0\,,\,\,
p_v(2,0|T)=\frac{pd}{d+1}\,,\,\,
p_v(0,1|T)=\frac{p}{d+1}.
\end{align}
\item Suppose it has type 2. Then, there exist two integers (which depend on the location of $v$ in $T$) $a,b$, with $a\ge1$ {and $a+b\ge2$}, such that 
\begin{align}\label{middle}
 p_v(0,0|T)=1-p\,,\,\,\,p_v(1,0|T)=\frac{pb}{d+1}\,,\,\,\,p_v(2,0|T)=p-p\frac{a+b}{d+1}\,,\,\,\, p_v(0,1|T)=\frac{pa}{d+1}.
\end{align}
\end{itemize}

\subsection{Proof of the lower bound}\label{def:coupling}

We are now ready to prove our lower bound. 

\begin{proof}[Proof of the lower bound of Theorem \ref{theorem:bounds}]
Let $N_t^{FM,i},i=1,2$ (\emph{resp.} $N_t^{TT,i},i=1,2$) count the number of active particles of type $i$ in the system at time $t$ in the FMBP (\emph{resp.} in the TTBP).

Since we are only interested in bounding the critical parameter of the frog model, instead of starting from $\mathcal T_0=\{o\}$ we can start the frog model from any of the configurations satisfying $|\mathcal T_0|=d+3$. So let us start from any vectors $(N_0^{FM,1},N_0^{FM,2})$ having positive probability to be produced in the FM with $|\mathcal T_0|=d+3$, and put $(N_0^{TT,1},N_0^{TT,2})=(N_0^{FM,1},N_0^{FM,2})$. If we can couple these processes in such a way that, for any $t\ge0$, $N_t^{TT,1}\ge N_t^{FM,1}$ and $N_t^{TT,1}+N_t^{TT,2}\ge N_t^{FM,1}+N_t^{FM,2}$, then, in particular, the total number of particles in the TTBP is at least as large as the total number of activated frogs in the FMBP, at each time step.  Together with what we said in Subsection \ref{def:GW}, if
\begin{equation*}
p\le  \frac{2 (d+1)}{\sqrt{4 d^2+4 d-3}+2 d+1},
\end{equation*}
then the FMBP would not survive with probability $1$, which would conclude the proof of the theorem. 

So it only remains to prove that we can couple these processes in such a way that, for any $t\ge0$, $N_t^{TT,1}\ge N_t^{FM,1}$ and $N_t^{TT,1}+N_t^{TT,2}\ge N_t^{FM,1}+N_t^{FM,2}$. The inequalities are satisfied at $t=0$ by definition. We assume the inequalities are satisfied at time $t$, and we now want to prove that they are still satisfied at time $t+1$. To couple the processes at $t+1$, we need to couple the probability distributions $p_1,p_2$ and $p_v(\cdot|T)$. We do this using a random variable $U_{t+1}$ uniformly distributed in $[0,1]$ (independent of everything) and several partitions of $[0,1]$. First, partitions  $\mathcal P_1,\mathcal P_2$ to construct $p_1,p_2$:
\begin{align*}
\mathcal P_1&=\{I_1^{0,1},I_1^{2,0},I_1^{0,0}\}\\
\mathcal P_2&=\{I_2^{0,1},I_2^{1,0},I_2^{2,0},I_2^{0,0}\}
\end{align*}
where 
\[
I_1^{0,1}=\left[0,\frac{p}{d+1}\right[\,\,,\,\,\,I_1^{2,0}=\left[\frac{p}{d+1},p\right[\,\,,\,\,\,I^{0,0}=[p,1]
\]
and
\[
I_2^{0,1}=\left[0,\frac{p}{d+1}\right[\,\,,\,\,\,I_2^{1,0}=\left[\frac{p}{d+1},\frac{2p}{d+1}\right[\,\,,\,\,\,I_2^{2,0}=\left[\frac{2p}{d+1},p\right[\,\,,\,\,\,I_2^{0,0}=[p,1]\,.
\]
We refer to Figure \ref{fig:partitions} for a pictorial representation of these partitions.

Observe that 
\[
P(U_{t+1}\in I_1^{i,j})=p_1(i,j)\,\,\,\,\text{and}\,\,\,\,P(U_{t+1}\in I_2^{i,j})=p_2(i,j). 
\]

Now, for moving frogs of type 1 in the FMBP, no matter what is the pair $(v,T)$, we can use the partition $\mathcal P_1$ since for such frogs $p_v(\cdot|T)=p_1$. 

According to \eqref{middle}, for moving frogs of type 2 we use the partition ({recall that $a\ge1$ and $a+b\ge2$})
\[
\mathcal P_{(a,b)}=\{I_{(a,b)}^{0,1},I_{(a,b)}^{1,0},I_{(a,b)}^{2,0},I_{(a,b)}^{0,0}\}
\]
where
\[
I_{(a,b)}^{0,1}=\left[0,\frac{ap}{d+1}\right[\,\,,\,\,\,I_{(a,b)}^{1,0}=\left[\frac{ap}{d+1},\frac{(a+b)p}{d+1}\right[\,\,,\,\,\,I_{(a,b)}^{2,0}=\left[\frac{(a+b)p}{d+1},p\right[\,\,,\,\,\,I_{(a,b)}^{0,0}=[p,1]\,.
\]
Observe that, for any  $a,b$ and $i,j$, we have
\[
P(U_{t+1}\in I_{(a,b)}^{i,j})=p_v(i,j|T)\,.
\]

\begin{figure}[h]
\centering
\includegraphics[scale = 0.65]{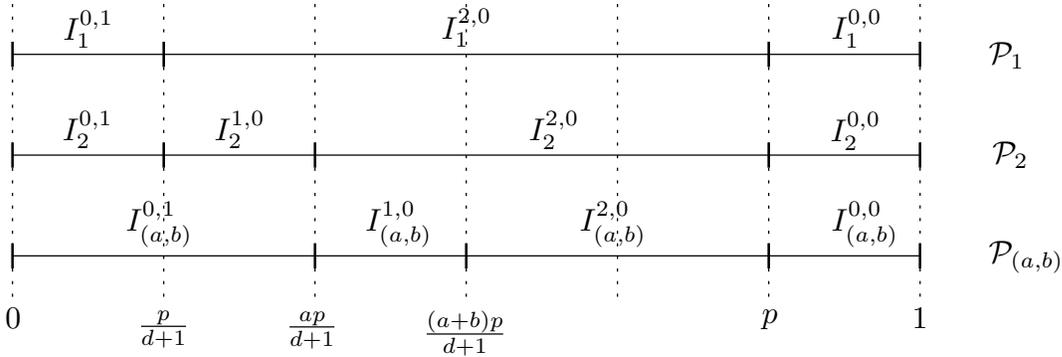}
\caption{Pictorial representation of the partitions $\mathcal P_1,\mathcal P_2$ and $\mathcal P_{(a,b)}$, when $d=4,a=2$ and $b=1$.}
\label{fig:partitions}
\end{figure}

The coupling is now performed updating both processes at time $t+1$ using the same uniform $U_{t+1}$. We can now establish the recursion from $t$ to $t+1$, recalling that the recursion hypothesis is
$N_t^{TT,1}\ge N_t^{FM,1}$ and $N_t^{TT,1}+N_t^{TT,2}\ge N_t^{FM,1}+N_t^{FM,2}$:\\

\begin{enumerate}
\item If the chosen frog for the next move of the FMBP is of type 1, then a particle of type 1 of the TTBP is chosen to give birth to its offspring. Since $N_t^{TT,1}\ge N_t^{FM,1}$, this choice is always possible. In this case, according to our coupling, the two chosen particles (one in each process) give birth the the same offspring, and thus the inequalities are maintained.\\

\item Suppose now that the chosen frog  is of type 2, then, either a type 2 particle of the branching process is chosen to give birth to its offspring, or, if there is none, a type 1 particle is chosen.\\

According to the location $v$ in $T$ of the chosen frog of type 2, a pair $(a,b)$ of integers is associated, as explained before. For any fixed pair $(a,b)$,  we have the following possibilities: \\

\begin{enumerate}
\item The particle of the TTBP is also a type 2 particle: then
\begin{itemize}
\item if $U_{t+1}\le \frac{p}{d+1}$, then one particle of type 2 is created in each process so that the inequalities are maintained;
\item if $U_{t+1}\ge p$, then the chosen particle dies in each process so that the inequalities are maintained;
\item if $U_{t+1}\in \left[\frac{p}{d+1},\frac{ap}{d+1}\right]$ (and therefore $a\ge2$ since otherwise this interval half open to the right is empty) the branching process produces two type 1 particles while the frog model produces one type 2 particle, and therefore the inequalities are maintained
%\item if $a\ge3$ then the branching process produces either one type 2 particle or one or two type 1 particles. (In the case of Figure \ref{fig:partitions}, for instance, we would have one particle of type 1 in the branching process and one particle of type 2 in the frog model). In any case, the desired inequalities are maintained;\\
%CONCLUDE!!!!
%\end{itemize}
\item if $U_{t+1}\in I_{(a,b)}^{1,0}$ then either one (if $a=1$) or two (if $a\ge2$) type 1 particles are created in the branching process, while the frog model creates one type 1 particle, thus the inequalities are maintained;
\item if $U_{t+1}\in I_{(a,b)}^{2,0}$ then two type 1 particles are created in both processes and the inequalities are maintained.\\
\end{itemize} 
\item There is no more type 2 particles in the TTBP, and we have to choose a type 1 particle. In this case, the reasoning is the same as above, observing moreover that, necessarily, $N^{TT,1}_t\ge N^{FM,1}_t+1$.\\
\end{enumerate}

Thus, in any case, we have $N_{t+1}^{TT,1}\ge N_{t+1}^{FM,1}$ and $N_{t+1}^{TT,1}+N_{t+1}^{TT,2}\ge N_{t+1}^{FM,1}+N_{t+1}^{FM,2}$, establishing the recursion. 
\end{enumerate}
The proof of the lower bound of Theorem \ref{theorem:bounds} is concluded. 
\end{proof}
 
\section{Proof of the upper bound}\label{sec:upper}

We will proceed in three steps. In a first step of the proof, we explain the comparison of the frog model with its oriented version. This part of the proof is common to \cite{lebensztayn/machado/popov/2005},  \cite{lebensztayn2019new} and \cite{gallo2018frog}. In the second  step, we state Theorem \ref{theo:graal}, a very nice result which identify the critical parameter of the oriented version with the root of a power series. Notice that, although this result was already proved in \cite{gallo2018frog} (see Theorem 1 therein), will partially prove it in order to give support to our discussion in Section \ref{sec:discussion}. Based on this theorem, the two last steps are dedicated to obtain bounds for the critical parameter of the oriented version, and these steps only rely on calculus. This is done by first finding finite degree polynomials (of degree 6 and degree 5) which approximate the power series from below and from above,  and next, finding root approximations for these polynomials. 

Let us now give some notation. We  denote by $d(v,v')$ the distance between any two vertices of $v,v'$ of $\mathbb T_d$, defined as the number of edges of the unique path connecting them.  We write $v\le v'$ if $v$ is one of the vertices on the path between the root $o$ and $v'$, and we naturally write $v<v'$ if $v\le v'$ and $v\ne v'$. 
 
\subsection{Step one: oriented version of the frog model}

Consider the following modification of the frog model, that we call \emph{oriented frog model}: when a frog, initially at vertex $v\in\mathbb T_d$ is activated and makes its random walk, it only activates the frogs of vertices $v'$ such that $v'>v$. 

It is obvious that, if this oriented model survives, then the original model survives as well. So if we find an upper bound for the critical parameter $\hat p_c$ of the oriented model, it will also be an upper bound for the critical parameter of the original model. 

Our objective in the remaining of the proof  will be to find tight upper and lower bounds for $\hat p_c$. Notice that even though the lower bound on $\hat p_c$ is not necessary to get our upper bound for $p_c$, we will partially provide it to show that the upper bound of $\hat p_c$ is already very accurate as an estimate of $\hat p_c$ (see the discussion in Section \ref{sec:discussion}). 

\subsection{Step two: $\hat p_c$ as the root of a power series}

For any vertices $v',v$, we denote by $\{v\rightarrow v'\}$ the event that the frog at $v$, if it were active, would visit  $v'$ during his random walk. 
Lemma 2.1 of \cite{lebensztayn/machado/popov/2005} states that for any $v,v'$ such that $d(v,v')=n\ge1$, 
\begin{equation}\label{eq:geo_oriented}
\mathbb P_p(v{\rightarrow}v')=r^n
\end{equation}
where 
\begin{equation}\label{eq:r_e_p}
r=r(p,d):=\dfrac{d+1-\sqrt{(d+1)^2-4dp^2}}{2dp},
\end{equation}
a fact which will have importance later on. Notice in particular that, writing $r_c=r(\hat p_c,d)$, we can now focus on $r_c$ directly since $r$ is a continuous bijection on $[0,1]$.

The objective of the present step is to prove the following very nice theorem which gives the critical parameter as root of a power series. It is proved, indirectly, in \cite{gallo2018frog}, so we include its proof here for the sake of completeness. 

\begin{theorem}\label{theo:graal}
$\sum_{k\geq 1} d^{k}r(\hat p_c,d)^{k}\prod_{i=1}^{k-1} (1-r(\hat p_c,d)^{i})=1.$
\end{theorem}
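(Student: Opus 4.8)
The plan is to interpret the oriented frog model as a branching process on the tree and use the criticality condition for its survival.

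=== PROOF PROPOSAL ===

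\textbf{Approach.} The plan is to reformulate survival of the oriented frog model in terms of an associated branching-type process whose expected population growth is governed by the visiting probabilities $r^n$ from \eqref{eq:geo_oriented}, and then to identify the criticality equation $\rho=1$ with the displayed power series identity. The key structural fact is that in the oriented model, a frog activated at a vertex $v$ only activates frogs strictly below $v$, so the genealogy of activations lives naturally on $\mathbb{T}_d$ oriented away from the root. This lets us track, level by level, the ``frontier'' of newly activated frogs and compute the expected number of new activations produced by a single active frog.

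\textbf{Key steps.} First I would set up the natural branching structure: fix an active frog at a vertex $v$ and consider which descendant frogs it activates. By \eqref{eq:geo_oriented}, a frog at $v$ reaches a specific descendant $v'$ at distance $n$ with probability $r^n$. Second, I would compute the expected number of \emph{first} activations contributed along each downward ray, being careful that a descendant $v'$ at distance $k$ is newly activated by this frog only if it is reached but none of the intermediate frogs on the path from $v$ to $v'$ were already counted — this is where the product $\prod_{i=1}^{k-1}(1-r^i)$ enters. Concretely, summing over the $d^k$ vertices at distance $k$ below $v$, the factor $d^k r^k$ counts the expected number reached at level $k$, while $\prod_{i=1}^{k-1}(1-r^i)$ accounts for the event that the chain of activations propagates ``freshly'' to that depth without the intermediate vertices short-circuiting the count. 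Third, I would assemble these contributions into the mean offspring of the branching process,
\[
m(r)=\sum_{k\ge1} d^k r^k \prod_{i=1}^{k-1}(1-r^i),
\]
and invoke the standard Galton--Watson dichotomy (as already cited via \cite{athreya2004branching}): the process survives with positive probability if and only if $m(r)>1$. Fourth, by continuity and monotonicity of $m$ in $r$ (each term is increasing in $r$ for small $r$, and $r$ is a continuous bijection of $p\in[0,1]$ by the remark after \eqref{eq:r_e_p}), the critical value is characterized by $m(r_c)=1$, which upon substituting $r_c=r(\hat p_c,d)$ is exactly the claimed identity.

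\textbf{Main obstacle.} The delicate point is the combinatorial/probabilistic justification of the product $\prod_{i=1}^{k-1}(1-r^i)$, i.e.\ correctly defining the branching process so that each activation is counted exactly once and the offspring law genuinely has mean $m(r)$. The naive expectation of total reached vertices would give $\sum_k d^k r^k$ without the product, which overcounts because a single frog's walk can activate many vertices along the same ray, and those secondary activations must not be double-counted as independent branching events. I would resolve this by defining offspring to be only the \emph{highest} newly-activated frog along each ray (or equivalently by a careful inclusion--exclusion showing that the ``fresh'' activation at depth $k$ carries probability $r^k\prod_{i=1}^{k-1}(1-r^i)$), and by arguing that the resulting process dominates/is dominated appropriately so that its survival coincides with that of the oriented frog model. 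Establishing this equivalence rigorously — rather than the subsequent eigenvalue-style computation, which is routine — is the heart of the argument, and is presumably where the original proof in \cite{gallo2018frog} does the real work.
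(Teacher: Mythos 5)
Your activation genealogy (parent of $v'$ $:=$ the closest ancestor of $v'$ whose frog reaches $v'$) is well defined, its individuals are exactly the activated vertices, and the expected number of children of each individual is indeed $m(r)=\sum_{k\ge1} d^k r^k\prod_{i=1}^{k-1}(1-r^i)$. The genuine gap is the sentence ``invoke the standard Galton--Watson dichotomy'': this genealogy is \emph{not} a Galton--Watson process, because offspring counts of an individual and of its descendants are dependent. The event ``$v'$ is a child of $v$'' requires that the frogs at the vertices strictly between $v$ and $v'$ do \emph{not} reach $v'$, and these are the very walks that determine the offspring of those intermediate individuals; so conditioning on the offspring of $v$ biases (negatively) the offspring of its children. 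With dependent offspring, mean $>1$ does not imply positive survival probability, so the supercritical half of your dichotomy is unjustified. Only the subcritical half survives: since consecutive parent--child events along a chain involve disjoint sets of frogs, the expected size of generation $n$ is exactly $m(r)^n$, and $m(r)<1$ gives a.s.\ extinction by a first-moment bound. Your proposed fix (counting only the ``highest newly-activated frog along each ray'', or inclusion--exclusion) only redefines the counting; it does not restore the independence that the dichotomy needs.

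This missing independence is precisely where the paper does its real work, and by a different route: it works level by level rather than generation by generation. Setting $u_n=\mathbb P_p(o\stackrel{c}{\rightarrow}v)$ for $d(o,v)=n$, the subcritical direction is a union bound, and the supercritical direction \eqref{eq:implication_upper} comes from a coarse-graining construction: keep a single activated frog at each visited vertex of levels $N,2N,\dots$; disjointness of the corresponding subtrees together with the memoryless property of the geometric lifetime \eqref{eq:geo_oriented} makes the stages genuinely independent, yielding a true Galton--Watson minorant with offspring mean $d^Nu_N$. The series then enters not as a mean offspring but through renewal theory: $u_k=f_k+\sum_{j=1}^{k-1}u_jf_{k-j}$ with $f_k=r^k\prod_{i=1}^{k-1}(1-r^i)$, which gives that $u_\infty=\lim u_n^{1/n}$ exists, is continuous in $r$, and satisfies $\sum_{k\ge1} f_ku_\infty^{-k}=1$; the theorem follows from $u_\infty(r_c,d)=1/d$. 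To salvage your generation-based argument you would need an independence-generating device of this kind (or a second-moment argument) in the case $m(r)>1$; as written, the proof fails at its central step.
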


\begin{proof}
We denote by $\{v\stackrel{c}{\rightarrow}v'\}$ the event that either $\{v\rightarrow v'\}$, or there exist $k\ge1$ and a sequence of vertices $v_1,\ldots,v_k$ such that $v=:v_0<v_1<\ldots<v_k<v_{k+1}:=v'$ such that $\cap_{i=0}^{k}\{v_{i}\rightarrow v_{i+1}\}$. In words, $\{v\stackrel{c}{\rightarrow}v'\}$ means that $v$ has started a chain of activation of frogs which in particular activates $v'$. 
By symmetry, we can use  $u_n=u_n(r,d):=\mathbb P_p(o\stackrel{c}{\rightarrow}v)$ for any $v$ such that $d(o,v)=n$. 

On the one hand we have
\[
\mathbb P_p(\cup_{v:d(o,v)=n}\{o\stackrel{c}{\rightarrow}v\})\searrow\mathbb P_p(\text{survival of the oriented model}).
\]
Thus
\[
\mathbb P_p(\cup_{v:d(o,v)=n}\{o\stackrel{c}{\rightarrow}v\})\le d^n\mathbb P_p(o\stackrel{c}{\rightarrow}v)=:d^nu_n.
\]
In other words, 
\begin{equation}\label{eq:lower_implication}
d^nu_n\rightarrow0\Rightarrow \mathbb P_p(\text{survival of the oriented model})=0.
\end{equation}

On the other hand, the expected number of vertices at distance $n$ of the root which have been visited by frogs is, also by symmetry
\[
\sum_{v:d(o,v)=n}\mathbb P_p(o\stackrel{c}{\rightarrow}v)=d^nu_n.
\]
Fix some $N\ge1$ and consider the following process, obtained  from the oriented model \emph{via} the following recursive procedure: 
\begin{itemize}
\item At each vertex of level $N$ which has been visited (at time $N$, since the process is oriented), we keep only one activated frog. 
\item Each frog activated at level $iN,i\ge1$ is started, and at each vertex of level $(i+1)N$ which has been visited, we keep only one activated frog. 
\end{itemize}
Observe first that this modified process is dominated by the oriented process, since at each step $i\ge1$ of the recursive procedure we only keep 1 activated frog at the visited vertices at distance $iN$ of the root. Moreover, the frog we keep at $iN$, given it has reached this level, can be substituted by a new active frog, because of the loss of memory property of the geometric distribution \eqref{eq:geo_oriented}. It follows that the number of visited vertices at level $iN,i\ge1$ of this modified process has the same distribution as the number of individuals at the $i^{\text{th}}$ generation in a Galton-Watson tree with expected offspring $d^Nu_N$. If $d^Nu_N>1$,  the Galton-Watson has positive probability to survive,  and thus so does the oriented model. In other words
\begin{equation}\label{eq:implication_upper}
\exists N\ge1:d^Nu_N>1\Rightarrow  \mathbb P_p(\text{survival of the oriented model})>0.
\end{equation}

To continue, we need to study $u_n=\mathbb P_p(o\stackrel{c}{\rightarrow}v)$ and its limiting properties, and it is precisely in that study   that \cite{gallo2018frog} differs from \cite{lebensztayn/machado/popov/2005} and \cite{lebensztayn2019new} (see Section \ref{sec:discussion} for a discussion). Indeed, \cite{gallo2018frog} used a result of \cite{gallo/garcia/junior/rodriguez/2014} implying that $u_n$ is the probability that an undelayed renewal sequence, with inter-renewal distribution $f_k=r^{k}\prod_{i=1}^{k-1} (1-r^{i}),k\ge1$, has a renewal at time $n$. This allows them to conclude, using renewal theory  (see Section 4 in \cite{gallo2018frog}) that  $u_\infty(r,d):=\lim u_n^{1/n}$ exists, is  continuous in $r$, and it satisfies the equality
\begin{equation}\label{eq:graalinho2}
\sum_{k\geq 1} r^{k}u_\infty^{-k}\prod_{i=1}^{k-1} (1-r^{i})=1.
\end{equation}

Together with \eqref{eq:lower_implication} and \eqref{eq:implication_upper}, the existence of $u_\infty(r,d)$ gives us
\begin{align*}
u_\infty>1/d&\Rightarrow\mathbb P_p(\text{survival of the oriented model})>0\\
u_\infty<1/d&\Rightarrow\mathbb P_p(\text{survival of the oriented model})=0.
\end{align*}
The continuity of $u_\infty(r,d)$ allows us conclude that $r_c$ satisfies 
\begin{equation}\label{eq:graalinho}
u_\infty(r_c,d)=\frac{1}{d}.
\end{equation}
Putting \eqref{eq:graalinho} together with  \eqref{eq:graalinho2} concludes the proof of the theorem.
\end{proof}

\subsection{Third step: bounds for $\hat p_c$ as zeros of polynomials}

Using   \eqref{eq:r_e_p} and and Theorem \ref{theo:graal}, the way to proceed now is to find $\underline r=\underline r(d)$ and $\overline r=\overline r(d)$ such that 
\begin{equation}\label{eq:rs}
\underline r\le r_c\le \overline r
\end{equation}
which yields
\begin{equation}\label{eq:r_vs_p}
\frac{(d+1)\underline r}{1+d\underline r^2}\le \hat p_c\le \frac{(d+1)\overline r}{1+d\overline r^2}.
\end{equation}

A first characterization that we will give of $\underline r$ and $\overline r$ is as zeros of polynomials.

\begin{proposition}\label{corollary:lower and upper bounds as root of a polynomial}
Let  $\underline r$ (\emph{resp.} $\overline r$), denote the unique root of the polynomial $L(r):=-d^3 r^6+d^3 r^5+d^2 r^3-2 d r+1$ (\emph{resp.} of the polynomial $U(r):=d^3 r^5+d^2 r^3-2 d r+1$) in $r\in(0,1/d)$. Then 
\[
\underline r\le r_c\le \overline r.
\]
\end{proposition}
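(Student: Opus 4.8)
The plan is to use the power series identity from Theorem \ref{theo:graal}, namely
\[
\Phi(r):=\sum_{k\geq 1} d^{k}r^{k}\prod_{i=1}^{k-1} (1-r^{i})=1 \quad\text{at } r=r_c,
\]
and to sandwich $r_c$ by comparing $\Phi$ against two polynomials obtained by truncating the infinite product. The key observation is that the factors $(1-r^i)$ all lie in $(0,1)$ for $r\in(0,1/d)\subset(0,1)$, so truncating the product $\prod_{i=1}^{k-1}(1-r^i)$ after finitely many factors yields an \emph{upper} bound on each summand, while dropping whole tail terms of the sum yields a \emph{lower} bound. The goal is to choose these truncations so that the resulting bounds collapse to the finite-degree polynomials $L$ and $U$ in the statement.

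For the upper bound $\overline r$, I would bound $\Phi(r)$ from above by replacing each product $\prod_{i=1}^{k-1}(1-r^i)$ with the single factor $(1-r)$ for $k\ge 2$ (keeping the $k=1$ term $dr$ as is, since its empty product equals $1$), and then the geometric tail $\sum_{k\ge 2} d^k r^k (1-r) = d^2r^2(1-r)/(1-dr)$ can be summed in closed form. Setting the resulting rational bound equal to $1$ and clearing the denominator $1-dr$ should produce exactly $U(r)=d^3r^5+d^2r^3-2dr+1$ up to sign/rearrangement; since this bound overestimates $\Phi$, the value $\overline r$ solving $U(\overline r)=0$ satisfies $\Phi(\overline r)\ge 1=\Phi(r_c)$, and monotonicity of $\Phi$ on $(0,1/d)$ then forces $r_c\le\overline r$. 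For the lower bound $\underline r$, I would instead truncate the \emph{sum} at a finite order (keeping enough terms that, after bounding the retained products from above by fewer factors in a way that still underestimates the whole sum, the net effect is an underestimate), producing the degree-$6$ polynomial $L(r)=-d^3r^6+d^3r^5+d^2r^3-2dr+1$. The extra $-d^3r^6$ term relative to $U$ is the signature of retaining one more genuine factor $(1-r^2)$ in one of the terms, which is what turns an upper-bounding truncation into a lower-bounding one.

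The steps in order would be: (i) establish monotonicity of $\Phi(r)$ in $r$ on $(0,1/d)$ and that $\Phi$ crosses $1$ exactly once there, so that the comparison-with-$1$ argument is valid and $r_c$ is well defined; (ii) prove the inequality $\Phi(r)\le$ (rational upper bound) and simplify to get $U$, concluding $r_c\le\overline r$; (iii) prove the matching lower inequality $\Phi(r)\ge$ (rational lower bound), simplify to get $L$, concluding $\underline r\le r_c$; (iv) verify that each of $L$ and $U$ has a \emph{unique} root in $(0,1/d)$, which is needed for $\underline r,\overline r$ to be well defined by the proposition's phrasing. Step (iv) is elementary calculus (sign changes and derivative/monotonicity checks), and step (i) follows since every summand of $\Phi$ is increasing in $r$ near $0$ and the series is dominated termwise.

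The main obstacle I expect is step (iii), the lower bound. Upper-bounding an alternating-sign-like product by dropping factors is clean, but to \emph{lower} bound $\Phi$ one must simultaneously discard positive tail terms of the series (which lowers the value) while also under-bounding the retained products $\prod(1-r^i)$ (which further lowers them), and one has to check that the polynomial $L$ obtained this way truly lies below $\Phi$ for all $r\in(0,1/d)$ rather than merely near the root. Concretely, the delicate point is justifying an inequality of the form $\prod_{i=1}^{k-1}(1-r^i)\ge (1-r)(1-r^2)\cdots$ truncated appropriately, or equivalently controlling the sign of the difference $\Phi(r)-(\text{poly}(r)/(1-dr))$ uniformly; getting the truncation order exactly right so that the leftover terms have a definite sign is where the real work lies. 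Once the two uniform inequalities $L(r)/(1-dr)\le \Phi(r)\le U(r)/(1-dr)$ are secured on $(0,1/d)$, the conclusion $\underline r\le r_c\le\overline r$ is immediate from monotonicity and the fact that $\Phi(r_c)=1$.
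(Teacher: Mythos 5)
Your high-level strategy --- evaluate the series $\Phi(r)=\sum_{k\ge1}d^kr^k\prod_{i=1}^{k-1}(1-r^i)$ of Theorem \ref{theo:graal} at $r_c$, sandwich it between tractable bounds obtained by controlling the products, and convert the resulting inequalities into root comparisons for polynomials --- is exactly the paper's. But the execution contains two genuine errors. First, your directions of inference are inverted. If $g\ge\Phi$ on $(0,1/d)$ (an over-estimate, e.g.\ obtained by dropping factors from the products, which is your proposed route to $U$) and $g(\tilde r)=1$, then $\Phi(\tilde r)\le g(\tilde r)=1=\Phi(r_c)$, so monotonicity of $\Phi$ gives $\tilde r\le r_c$: over-estimates of $\Phi$ produce \emph{lower} bounds on $r_c$, and under-estimates produce \emph{upper} bounds. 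You claim the opposite on both sides (you write ``$\Phi(\overline r)\ge1$'' where only ``$\le$'' follows), so your construction for the upper bound would in fact deliver a lower bound, and vice versa. Second, the algebra does not produce the stated polynomials. Replacing the products by the single factor $(1-r)$ for $k\ge2$ and summing gives $dr+d^2r^2(1-r)/(1-dr)=1$, which after clearing denominators is the cubic $d^2r^3-2dr+1=0$ (essentially the Lebensztayn--Machado--Popov bound), not $U(r)=d^3r^5+d^2r^3-2dr+1$; likewise $L$ does not arise from truncating the sum at a finite order. In the paper, both polynomials come from keeping the $k=1$ and $k=2$ terms of the series \emph{exact} and bounding the products only for $k\ge3$ via $1-r-r^2\le\prod_{i=1}^{k-1}(1-r^i)\le(1-r)(1-r^2)$, then summing the geometric tail $\sum_{k\ge3}(dr)^k=d^3r^3/(1-dr)$: the upper product bound yields $L$ (hence the lower bound $\underline r\le r_c$), the lower product bound yields $U$ (hence $r_c\le\overline r$).

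The inequality you flag as ``where the real work lies'' is also stated in a false direction: a truncated product $(1-r)(1-r^2)\cdots$ is always $\ge$ the full product, since every dropped factor lies in $(0,1)$, so $\prod_{i=1}^{k-1}(1-r^i)\ge(1-r)(1-r^2)$ fails for $k\ge4$. The usable lower bound is of Weierstrass type, $\prod_{i=1}^{k-1}(1-r^i)\ge1-r-r^2+r^k>1-r-r^2$, which the paper proves by a short induction; this is precisely the ingredient your plan is missing. Finally, your argument leans on monotonicity of the infinite series $\Phi$ itself, which you only sketch; the paper avoids this entirely: it evaluates the two sandwich inequalities directly at $r_c$ (where $\Phi(r_c)=1$), obtaining $U(r_c)\ge0$ and $L(r_c)\le0$, and then needs only the elementary sign behavior of the two explicit polynomials $L$ and $U$ on $(0,1/d)$, not any monotonicity of $\Phi$.
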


\begin{proof}
We have the inequalities for any $k\ge3$
\begin{equation}\label{eq:inequalities_rec}
1-r-r^2<(1-r-r^2+r^k\le) \prod _{i=1}^{k-1} \left(1-r^i\right) \leq (1-r)(1-r^2)
\end{equation}
where the lower bound between parenthesis follows by recursion and the two others are trivial. Inequalities \eqref{eq:inequalities_rec}  imply that
\[
f_{\text{inf}}(r)\le\sum_{k\geq 1} d^{k}r^{k}\prod_{i=1}^{k-1} (1-r^{i})\le f_{\text{sup}}(r)
\]
with 
\[
f_{\text{inf}}(r)=dr+d^2(1-r)r^2+(1-r-r^2)\left(-\frac{d^3 r^3}{d r-1} \right)
\]
and
\[
f_{\text{sup}}(r)=dr+d^2(1-r)r^2+(1-r)(1-r^2) \left(-\frac{d^3 r^3}{d r-1} \right ).
\]
With Theorem \ref{theo:graal} we conclude that 
\[
f_{\text{inf}}(r_c)\le 1\le f_{\text{sup}}(r_c).
\]
With some further algebra, we notice that
\begin{align*}
f_{\text{inf}}(r_c)\le 1&\Leftrightarrow d^3 r_c^5+d^2 r_c^3-2 d r_c+1\ge0\\
f_{\text{sup}}(r_c)\ge1&\Leftrightarrow -d^3 r_c^6+d^3 r_c^5+d^2 r_c^3-2 d r+1\le 0.
\end{align*} 
But since both polynomials are decreasing on $(0,1/d)$, then we conclude that the root $\underline r$ of $L(r)=-d^3 r^6+d^3 r^5+d^2 r^3-2 d r+1$ and the root $\overline r$ of $U(r)=d^3 r^5+d^2 r^3-2 d r+1$ will satisfy
\[
\underline r\le r_c\le \overline r
\]
as stated by the lemma.
\end{proof}
  
\subsection{Fourth (last) step: explicit bounds for $\hat p_c$ as functions of $d$}\label{sec:lastep}
In principle, we could simply seek for the exact expression of the zeros stated in Proposition \ref{corollary:lower and upper bounds as root of a polynomial}. However, this leads to very complicated expressions. So we will do one further step to get approximations of $\overline r$ and $\underline r$.

\begin{lemma}\label{corollary: explicit formula for critical parameter for directed tree}
For $d\geq2$
\begin{equation*}
\underline r(d)\ge r_L:= \frac{5 - 8 d - 16 d^2 + 64 d^3}{12 d - 20 d^2 - 48 d^3 + 128 d^4}
\end{equation*}
and
\begin{equation*}
\overline r(d)\leq r_U:= \frac{2-\frac{1}{14d^2}-4 d}{5 d-8 d^2}.
\end{equation*}
\end{lemma}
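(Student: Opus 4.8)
The plan is to prove the two bounds separately, and in each case the strategy is the same: I have from Proposition \ref{corollary:lower and upper bounds as root of a polynomial} that $\underline r$ (resp. $\overline r$) is the unique root in $(0,1/d)$ of the decreasing polynomial $L$ (resp. $U$). Since $L$ and $U$ are strictly decreasing on $(0,1/d)$, to show $\underline r \ge r_L$ it suffices to verify that $L(r_L) \ge 0$ together with $r_L \in (0,1/d)$; indeed, if $L(r_L)\ge 0 = L(\underline r)$ and $L$ is decreasing, then $r_L \le \underline r$. Symmetrically, to show $\overline r \le r_U$ it suffices to check $U(r_U) \le 0 = U(\overline r)$ with $r_U \in (0,1/d)$, which forces $r_U \ge \overline r$. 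So the whole lemma reduces to two sign verifications and two range checks, all as functions of the integer parameter $d \ge 2$.

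First I would establish the range conditions. The candidate values $r_L$ and $r_U$ are explicit rational functions of $d$, and I would show that for $d\ge 2$ each lies strictly between $0$ and $1/d$. This amounts to checking that numerator and denominator have the expected signs and that $d\cdot r_L < 1$, $d\cdot r_U < 1$; for instance $r_U = \frac{2-\frac{1}{14d^2}-4d}{5d-8d^2}$ has both numerator and denominator negative for $d\ge 2$, so $r_U>0$, and one then bounds $d\,r_U$ away from $1$. These are elementary inequalities between polynomials in $d$.

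The substantive computation is evaluating $L(r_L)$ and $U(r_U)$ and pinning down their signs. Substituting the rational expression $r_L$ into the degree-$6$ polynomial $L$ (resp. $r_U$ into the degree-$5$ polynomial $U$) and clearing denominators produces a single rational function of $d$ whose numerator is a polynomial in $d$ of fairly high degree. I would factor out the (sign-definite) denominator and argue that the remaining numerator polynomial is nonnegative (for $L$) resp. nonpositive (for $U$) for all integers $d\ge 2$. The cleanest way to present this is to exhibit the numerator explicitly and either show all its coefficients have the required sign after an appropriate substitution such as $d = e+2$ with $e\ge 0$, or bound the leading term against the lower-order terms for $d\ge 2$. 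The specific constant $\frac{1}{14d^2}$ appearing in $r_U$ is presumably chosen precisely so that this sign becomes correct — it is a small correction pushing the value just past $\overline r$ — so the verification should be tight but should go through.

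The main obstacle I anticipate is purely the algebraic bookkeeping: the polynomials obtained after substitution and clearing denominators are large, and keeping track of signs of high-degree integer polynomials in $d$ by hand is error-prone. The conceptual content — monotonicity of $L,U$ plus one-sided sign checks at the candidate points — is straightforward, so I would lean on the fact that all the quantities are rational in $d$ and reduce everything to verifying that a single integer-coefficient polynomial in $d$ keeps a fixed sign for $d\ge 2$, ideally by a clean coefficient argument after shifting $d\mapsto d+2$ rather than by invoking a computer algebra system. The delicate point is ensuring the inequalities stay valid all the way down to the boundary case $d=2$, where the correction term $\frac{1}{14d^2}$ matters most.
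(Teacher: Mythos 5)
Your reduction is sound, and for the upper bound it is essentially the paper's own proof: the paper likewise establishes that $U$ is decreasing on the relevant range and then verifies $U(r_U)<0$. Where you genuinely differ is the lower bound. The paper does not verify $L(r_L)\ge 0$ by substitution at all; it observes that $r_L$ \emph{is} the second Newton--Raphson iterate of $L$ started at $t_0=0$, and that $L$ is convex on $(0,1/d)$. Convexity puts every tangent line below the graph, so each Newton iterate $t_n$ satisfies $L(t_n)\ge 0$ and hence $t_n\le \underline r$: the desired inequality holds by construction, with no large polynomial to sign-check. Your direct substitution would in fact succeed (its sign is guaranteed for exactly this reason), but it produces a polynomial in $d$ of degree in the twenties, and there is no a priori reason that all coefficients acquire one sign after the shift $d\mapsto d+2$. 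For the upper bound the paper does what you suspect one may be forced to do: it uses Mathematica to reduce $U(r_U)<0$ to positivity of an explicit degree-$14$ integer polynomial $\bar U(d)$, then applies a Cauchy bound showing all roots of $\bar U$ are smaller than $6$, leaving only the finite checks $d=2,\dots,6$. That Cauchy-bound-plus-finite-check device is the practical substitute for your hoped-for clean coefficient argument, and you should expect to need it; the paper's Newton--Raphson trick is the clean way to avoid the analogous computation on the lower-bound side.

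One genuine, though fixable, flaw: you assert, citing Proposition \ref{corollary:lower and upper bounds as root of a polynomial}, that $U$ is strictly decreasing on all of $(0,1/d)$. This is false at $d=2$: one computes $U'(r)<0$ if and only if $5d^2r^4+3dr^2-2<0$, i.e.\ $r<\sqrt{2/(5d)}$, and $\sqrt{2/(5d)}<1/d$ precisely when $d<5/2$. The paper's appendix is careful about this point; it establishes monotonicity of $U$ only below a threshold of order $d^{-1/2}$ and then checks that the relevant quantities lie below that threshold. Your conclusion survives with a small patch: since $U(1/d)=1/d^2+1/d-1<0$ for $d\ge2$ and $U$ decreases and then increases on $(0,1/d)$, one still has $U>0$ exactly on $(0,\overline r)$ and $U<0$ on $(\overline r,1/d)$, so $U(r_U)\le 0$ together with $r_U\in(0,1/d)$ still forces $r_U\ge\overline r$. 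But as written, your monotonicity premise fails exactly at the boundary case $d=2$ that you yourself flag as the delicate one, so this patch (or the paper's restriction to the smaller interval) must be made explicit.
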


Putting Display \eqref{eq:r_vs_p}, Proposition \ref{corollary:lower and upper bounds as root of a polynomial} and Lemma \ref{corollary: explicit formula for critical parameter for directed tree} together, we get explicit lower and upper bounds for $\hat p_c$ as function of $d$. In conjunction with Step 1, this in particular proves the upper bound given in Theorem \ref{theorem:bounds}.

\section{Discussion on the  bounds and their proofs}\label{sec:discussion}

\subsection{Further literature on related models}

Under the name ``frog model'', a vast literature as been developed in recent years. Here we highlight two other papers focussing on trees. The first one,  \cite{hermon2018frogs}, considers the speed of the spread and the final proportion of activated frogs, on finite trees and with frogs having a.s. finite lifetime (not necessarily starting with one sleeping frog \emph{per} site). Although the situation and problem are slightly different than us here, the paper also gives a nice account of the recent literature for general frog models. The second one, much more related to our, is \cite{hoffman2017recurrence}, which proves transience and recurrence of the frog model on infinite trees when the frogs have infinite lifetime (and starting with one sleeping frog \emph{per} site). In this paper, the authors use a similar argument as the one used here to get the upper bound, designing a multi-type branching process which dominates the frog model of interest. We will come back to this in Section \ref{sec:lb} when we will discuss lower bounds. 

\subsection{The upper bounds and their proofs}

Our objective here is to compare the upper bounds of the literature, as well as their proofs. Specifically, we have to compare the works of \cite{lebensztayn/machado/popov/2005}, \cite{gallo2018frog} and \cite{lebensztayn2019new}, and the present work.

As already mentioned, the proofs of these works have Step 1 in common: they consider the oriented version of the frog model. The main difference is in Step 2. 
\cite{lebensztayn/machado/popov/2005} used the fact that, finding a solution, in $p$, for $d^ku_k(p,d)=1$ inside the interval $(0,1/d)$, yields an upper bound for $p_c$. This is the content of Theorem 3.1 therein, and it is a fact which can also be concluded from \eqref{eq:implication_upper}  above. The problem is that this yields a bound which depends on $k$, and as they observe, it is not obvious whether this sequence of upper bounds is decreasing, so they cannot make the limiting procedure at this step. Instead, they consider a sequence $v_k\le u_k$, and work on the asymptotic of the sequence of solutions of $d^kv_k$ as function of $r$, or, equivalently, of $p$ (see Lemmas 4.1, 4.2 and 4.3 therein). It is interesting how this approach differs from ours, on a mathematical ground. Formally, what \cite{lebensztayn/machado/popov/2005} obtained is that (see Display (4.2) therein)
\begin{equation}\label{eq:popov_at_n}
u_k=r^k\prod_{i=1}^{k-1} (1-r^{i})+\sum_{j=1}^{k-1}r^{k-j}u_j\prod_{l=1}^{k-j-1} (1-r^{l}).
\end{equation}
At this point what they decided to do, instead of studying the asymptotic behavior in $k$, is to take 
\[
v_k=r^k\prod_{i=1}^{k-1} (1-r)+\sum_{i=1}^{k-1}r^{k-j}u_j\prod_{l=1}^{k-j-1} (1-r)=r^k(1-r)^{k-1}+\sum_{i=1}^{k-1}r^{k-j}u_j(1-r)^{k-j-1}
\]
which clearly satisfies $v_k\le u_k$. Observe that this amounts simply to substitute $r^i$ and $r^l$ by $r$ into   the products of \eqref{eq:popov_at_n}. They can then work asymptotically with the solutions of  $d^kv_k=1$ and conclude  their bound $r_c\le 1-\sqrt{{(d-1)}/{d}}$, which is actually the solution of $dr(2-r)=1$ for $r\in(0,1/d)$. 
It was also remarked by \cite{lebensztayn/machado/popov/2005} that substituting the $r^i$ and $r^l$ by $r^2$ for any $k\ge2$ would naturally yield tighter, yet more complicated bounds for $r_c$. Indeed, they state that $r_c$ would be the root $\bar r_U$ of $\bar U(r):=dr^4-d(d+1)r^3+2dr-1$ in $(0,1/d)$. It is precisely what  \cite{lebensztayn2019new} used, yielding yet another refinement of the bound, although very complicated (see Definition 2.1 and Theorem 2.2 therein).

On the contrary, what we do (and what was done by \cite{gallo2018frog}) in Step 2 is that we directly work asymptotically on \eqref{eq:popov_at_n} using renewal theory, and this yields Theorem  \ref{theo:graal}. In particular, notice that the bounds of \cite{lebensztayn/machado/popov/2005} and \cite{lebensztayn2019new} are direct consequences of Theorem \ref{theo:graal} as well: indeed, substituting, in the products, $r^i$ and $r^l$ by $r$, yields the polynomial $dr(2-r)=1$ used by \cite{lebensztayn/machado/popov/2005}, and substituting by $r^i$ and $r^l$ by $r^2$ (for $i,l\ge2$) yields the polynomial $\bar U(r):=dr^4-d(d+1)r^3+2dr-1$ used by \cite{lebensztayn2019new}. 

Clearly our upper bound \eqref{eq:bounds} is simpler than the one of \cite{lebensztayn2019new}. In order to see that it  is also tighter, it is enough to notice (we did this using Mathematica) that $\bar U(r_U)<0$ and that $\bar U$ is monotonically increasing on $(0,1/d)$ (see Lemma \ref{corollary: explicit formula for critical parameter for directed tree} for the definition of $r_U$). 

\subsection{The lower bounds and their proofs}\label{sec:lb}

The original lower bound of \cite{alves2002-2} was based on a simple coupling with a one-type branching process in which  each particle could have 0 offsprings with probability $1-p$, 1 offspring with probability $p/(d+1)$ (for frogs coming back) and 2 offsprings with probability $pd/(d+1)$. In other words, their coupling took into account the fact that, except for the frog initially at the root, an active frog necessarily is surrounded by at least one neighbouring site. What we noticed in the present paper  is that after a certain number of steps (almost-surely finite), any activated frog which is not at the tip of the visited cluster has at least two visited neighbouring sites. In order to take this into account,  we needed to consider a two-type branching process. 

The frog model can be seen as an ``infinite types'' branching process, and it is naturally possible to improve further our reasoning. The idea would be to compare the frog model with  branching processes having more and more types.  For instance, a simple modification of the multi-type branching process used by \cite{hoffman2017recurrence} in the proof of their Proposition 19 could be used as well in our setting. This amounts essentially in adding types corresponding to keeping track of two simultaneously activated particles. Doing so, we are able, for instance, to prove that $p_c(2)<p_c(3)$, but it is impossible to get explicit expression holding for any $d$ based on so much types. More generally, we tried such method up to a certain level, and obtained slightly tighter lower bounds with very involved expressions, and we preferred to keep it simple at the cost of precision. 

\subsection{Concluding remark}

Tighter bounds on $\hat p_c$ can be obtained from Theorem \ref{theo:graal}, this is a matter of root approximation for the power series of Theorem \ref{theo:graal}. However, as pointed by Table \ref{tab:Comparison between Theorems our theorem and Utria theorem}, such improvements would be almost insignificant compared to the distance to the lower bounds of $p_c$ since we are limited by the lower bound on $\hat p_c$, already very close to the upper bound. For this reason we mentioned in Introduction that further works should either improve the lower bound or find another approach (different from the comparison with the oriented version of the frog model) to get upper bounds for $p_c$. 
%Owing to what we said in Subsection \ref{sec:lb}, our bet is that it is the upper bound which needs to be improved through a new method of proof.  

\begin{table}[htbp]
  \centering
  \caption{Lower bound (LB) and upper bound (UB) of $p_c$ from Theorem \ref{theorem:bounds} and lower bound on $\hat p_c$ from Lemma \ref{corollary: explicit formula for critical parameter for directed tree}.}
    \begin{tabular}{cccc}
     \textbf{d} & \textbf{LB on $p_c$} &\textbf{LB on $\hat p_c$} & \textbf{UB on $\hat p_c$ and $p_c$} \\
    2     &0.6261364 &0.7103674 & 0.7137989 \\
    3     &0.5835921& 0.6419859 & 0.6428580 \\
    4     &0.5625890&0.6071563 & 0.6074957 \\
    5     &0.5500385&0.5860557 & 0.5862210 \\
    6     &0.5416859 &0.5719015 & 0.5719940 \\
    7     &0.5357250 &0.5617475 & 0.5618043 \\
    8     &0.5312564 &0.5541074 & 0.5541448 \\
    9     &0.5277818 &0.5481503 & 0.5481761 \\
    10   &0.5250027& 0.5433751 & 0.5433937 \\
    20   &0.5125001 &0.5217793 & 0.5217815 \\
    50   &0.5050000 &0.5087345 & 0.5087346 \\
    100 &0.5025000 &0.5043711 & 0.5043711 \\
    \end{tabular}%
  \label{tab:Comparison between Theorems our theorem and Utria theorem}%
\end{table}

\section{Appendix: Root approximations}
 
 As promised, we now prove Lemma \ref{corollary: explicit formula for critical parameter for directed tree}.
 \begin{proof}
For the lower bound, we use the Newton-Raphson method for root approximation of $L(r)=-r^3 r^6+d^3 r^5+d^2 r^3-2 d r+1$. The second derivative of $L(r)$ is positive for all $r\in(0,1/d)$, so  $L(r)$ is convex on this interval. Thus, the approximation calculated from the Newton-Raphson method will be smaller than the root $\underline r$. Recall that the Newton-Raphson iterative method is started from some value $t_0$ and for any $n\ge1$
\begin{eqnarray*}
t_{n} &=& t_{n-1} - \frac{L(t_{n-1})}{L'(t_{n-1})}.
\end{eqnarray*}
Starting with $t_0=0$ and iterating two time, we get after some algebraic manipulations
\begin{equation*}
t_2=\frac{5 - 8 d - 16 d^2 + 64 d^3}{12 d - 20 d^2 - 48 d^3 + 128 d^4}.
\end{equation*}
This concludes the proof of the lower bound. 

For the upper bound, the proof of the Newton-Raphson method is much longer so we will proceed simply proving that $r_U\ge \bar r$, which is faster. We proceed in two steps: we first show that the polynomial $U$ is monotonically decreasing in $r$ around the root of interest, and secondly, that $U(r_U)<0$ for any $d\ge2$. 

For the first step, notice that
\[
U'(r)<0\Leftrightarrow 5d^2 r^4+3d r^2-2 <0,
\]
This is equivalent to $r<\sqrt{{1}/{(5d)}}$. It only remains to get sure that $r_U<\sqrt{{1}/{(5d)}}$. To see this, we come back to the proof of Proposition \ref{corollary:lower and upper bounds as root of a polynomial}, and notice that 
\[
(1-r)^{k-1}\le \prod _{i=1}^{k-1} \left(1-r^i\right)
\] 
leads to an intermediary polynomial $f_{\text{inf}}(r)\le dr(2-r)\le \sum_{k\geq 1} d^{k}r^{k}\prod_{i=1}^{k-1} (1-r^{i})$. Thus $dr_U(2-r_U)\leq1$, meaning that indeed $r_U\le 1-\sqrt{{(d-1)}/{d}}$ which is strictly smaller than $\sqrt{{1}/{(5d)}}$ for any $d\ge2$. 

To prove  the second step, we used Mathematica to write $U(r_U(d))<0$ as
\[
-\frac{1}{537824 \,d^{12} (8 d - 5)^5}\bar U(d)<0
\]
where 
\begin{align*}
\bar U(d)&=211441664 \,d^{14} - 801511424 \,d^{13} + 988904672 \,d^{12} - 496642048\, d^{11}  \\&+ 72342816 \,d^{10} + 14993216 \,d^{9} - 2579360 \,d^{8} - 918064 \,d^{7} \\&+ 203840 \,d^{6} + 26460 \,d^{5} - 7840 \,d^4 - 280 \,d^3 + 140 \,d^2 - 1.
\end{align*}
We are done if we prove that  $\bar U(d)>0$ for any $d\ge2$. Using Cauchy bounds, we know that the largest root of this polynomial is smaller or equal to (for $i=0,\ldots,14$ we write $a_i$ for the coefficient of order $i$)
\[
m:=1+\max\left\{\left|\frac{a_{13}}{a_{14}}\right|,\ldots,\left|\frac{a_0}{a_{14}}\right|\right\}=1+\frac{988904672}{211441664}<6. 
\]
The proof is concluded noticing that for $d=2,\ldots,6$, $\bar U(d)>0$.

\end{proof}

\bibliographystyle{plain}
\bibliography{sandrobibli}
\end{document}